
\documentclass[a4,12pt]{amsart}
\oddsidemargin 0mm
\evensidemargin 0mm
\topmargin 0mm
\textwidth 160mm
\textheight 230mm
\tolerance=9999
\usepackage{amssymb,amstext,amsmath,amscd,amsthm,amsfonts,enumerate,latexsym}
\usepackage{color}
\usepackage[dvipdfmx]{graphicx}
\usepackage[all]{xy}

\usepackage[dvipdfmx]{hyperref}

\theoremstyle{plain}
\newtheorem{thm}{Theorem}[section]

\newtheorem*{thm*}{Theorem}
\newtheorem*{cor*}{Corollary}

\newtheorem{prop}[thm]{Proposition}

\newtheorem{lem}[thm]{Lemma}
\newtheorem{cor}[thm]{Corollary}
\newtheorem{corollary}[thm]{Corollary}

\newtheorem*{claim*}{Claim}

\theoremstyle{definition}

\newtheorem{ex}[thm]{Example}
\newtheorem{Example}[thm]{Example}
\newtheorem{remark}[thm]{Remark}

\theoremstyle{remark}

\numberwithin{equation}{thm}

\newtheorem*{ac}{Acknowledgments}


\def\Min{\operatorname{Min}}

\def\Im{\operatorname{Im}}

\def\Ker{\operatorname{Ker}}

\def\Max{\operatorname{Max}}

\def\m{\mathfrak m}
\def\n{\mathfrak n}

\def\p{\mathfrak p}

\newcommand{\calF}{\mathcal{F}}

\newcommand{\calR}{\mathcal{R}}

\newcommand{\calT}{\mathcal{T}}

\newcommand{\fkp}{\mathfrak{p}}

\newcommand{\mapright}[1]{%
\smash{\mathop{%
\hbox to 1cm{\rightarrowfill}}\limits^{#1}}}

\newcommand{\mapleft}[1]{%
\smash{\mathop{%
\hbox to 1cm{\leftarrowfill}}\limits_{#1}}}

\def\depth{\operatorname{depth}}

\def\height{\mathrm{ht}}
\def\Spec{\operatorname{Spec}}

\title[Construction of strictly closed rings]{Construction of strictly closed rings}

\author[Naoki Endo]{Naoki Endo}
\address{Department of Mathematics, Purdue University, West Lafayette, IN 47907 U.S.A}
\email{nendo@purdue.edu}
\urladdr{https://www.math.purdue.edu/~nendo/}

\author[Shiro Goto]{Shiro Goto}
\address{Department of Mathematics, School of Science and Technology, Meiji University, 1-1-1 Higashi-mita, Tama-ku, Kawasaki 214-8571, Japan}
\email{shirogoto@gmail.com}

\thanks{2020 {\em Mathematics Subject Classification.} 13A15, 13B22, 13B30.}
\thanks{{\em Key words and phrases.} Strictly closed ring, Arf ring, weakly Arf ring, Stanley-Reisner ring}

\thanks{The first author was partially supported by JSPS Grant-in-Aid for Young Scientists 20K14299 and JSPS Overseas Research Fellowships. The second author was partially supported by JSPS Grant-in-Aid for Scientific Research (C) 16K05112.}



\begin{document}

\maketitle

\setlength{\baselineskip} {15.2pt}

\begin{abstract}
The notion of strict closedness of rings was given by J. Lipman \cite{L} in connection with a conjecture of O. Zariski. The present purpose is to give a practical method of construction of strictly closed rings. It is also shown that the Stanley-Reisner rings of simplicial complexes (resp. $F$-pure rings satisfying the condition $(\operatorname{S}_2)$ of Serre) are strictly closed (resp. weakly Arf) rings.
\end{abstract}

\section{Introduction}\label{intro}

This paper aims at giving a practical method of construction of strictly closed rings. The notion of strict closedness of rings was firstly given by J. Lipman in his famous paper \cite{L} in connection with the Arf property. Let $S/R$ be an extension of commutative rings, and define
$$
R^{*}=\left\{ \alpha \in S \mid \alpha \otimes 1 = 1 \otimes \alpha \text{ in } S \otimes_RS\right\}.
$$
Then $R^*$ forms a subring of $S$ containing $R$, which is called {\it the strict closure} of $R$ in $S$. We say that $R$ is {\it strictly closed} in $S$, if $R=R^{*}$. Since $(R^*)^*=R^*$ in $S$, $R^*$ is strictly closed in $S$ (\cite[Section 4, p.672]{L}), so that $[*]^*$ is actually a closure operation. We simply say that $R$ is {\it strictly closed}, when  $R$ is strictly closed in $\overline{R}$, where $\overline{R}$ denotes the integral closure of $R$ in its total ring $\operatorname{Q}(R)$ of fractions. The reader may consult with \cite{L, CCCEGIM} about properties of strict closures. Here let us explain some of them  for the later use in the present paper.

Let $R$ be a Noetherian semi-local ring and assume that $R$ is a Cohen-Macaulay ring of dimension one, which means that for every $M \in \Max R$ $R_M$ is a Cohen-Macaulay local ring of dimension one. After \cite{L} we say that $R$ is an Arf ring, if the following two conditions are satisfied.
\begin{enumerate}[$(1)$]
\item Every integrally closed ideal $I$ in $R$ that contains a non-zerodivisor has {\it a principal reduction}, i.e. $I^{n+1} = a I^n$ for some $n \ge 0$ and $a \in I$. 
\item Let $x,y,z \in R$ and assume that $x$ is a non-zerodivisor of $R$ and $\frac{y}{x}, \frac{z}{x} \in \overline{R}$. Then $\frac{yz}{x} \in R$.
\end{enumerate}
The conditions are  equivalent to saying that every integrally closed ideal $I$ of $R$ is stable, that is $I^2=aI$ for some $a \in I$, once $I$ contains a non-zerodivisor of $R$. By means of this notation, Lipman extends, to arbitrary Cohen-Macaulay semi-local rings  of dimension one, the result of C. Arf \cite{Arf} about the multiplicity sequences of curve singularities.

The connection between the notion of Arf ring and strictly closed ring is expressed as a conjecture of Zariski. As is explained in \cite{L}, Zariski conjectured that for a Cohen-Macaulay semi-local ring $R$ of dimension one, $R$ is an Arf ring if and only if $R$ is strictly closed. Zariski himself proved the {\it if} part (\cite[Proposition 4.5]{L}), and Lipman showed the converse is also true (\cite[Theorem 4.6]{L}), when $R$ contains a field. Until it was recently proven by \cite[Theorem 4.4]{CCCEGIM} with full generality, it seems that Zariski's conjecture has been open for more than one half of century.

Let us say that a given commutative ring $R$ is weakly Arf, if it satisfies Condition $(2)$ in the above definition of the Arf rings. As Lipman predicted in \cite{L} and as the authors of \cite{CCCEGIM} are now developing a fruitful general theory, there is also a strong connection between the strict closedness and the weak Arf property. In fact, an arbitrary commutative ring $R$ is a weakly Arf ring, once it is strictly closed (\cite[Proof of Proposition 4.5]{L}), and as for Noetherian rings $R$ satisfying the condition $({\rm S}_2)$ of Serre, it is known by \cite[Corollary 4.6]{CCCEGIM} that $R$ is strictly closed if and only if $R$ is weakly Arf and the local ring $R_\fkp$ is Arf for every $\fkp \in \Spec R$ with $\height_R \fkp = 1$, and therefore, provided that $\height_RM \ge 2$ for every $M \in \Max R$, $R$ is strictly closed if and only if $R$ is weakly Arf (\cite[Corollary 4.6]{CCCEGIM}).

The strict closure behaves well with respect to the standard operations, such as localizations, polynomial extensions, and faithfully flat extensions (\cite[Proposition 4.2, Lemma 4.9]{CCCEGIM}, \cite[Proposition 4.3]{L}). It is proved in \cite[Corollary 13.6]{CCCEGIM} that the invariant subrings of strictly closed rings under a finite group action (except the modular case) are still strictly closed. The reader may consult with \cite{L, CCCEGIM} about further study of strict closed rings.

Let us now explain where the motivation for the present researches has come from. 
In general setting, it is not quite easy to determine whether a given  commutative ring $R$ is strictly closed or not. Even for the following simple case $$R=k[X^5,XY^4, Y^5]$$ where $X,Y$ are indeterminates over a field $k$, we meet some puzzled difficulty to compute the strict closure $$R^*=k[X^5, X^9Y^6, X^8Y^7, X^4Y^{11}, XY^4, Y^5]$$ of $R$. In order to overcome the difficulty, we certainly require some practical method of constructing strictly closed rings, which we shall provide in Section 2 of the present paper. As applications, we will explore a few concrete examples, including the above one. The Stanley-Reisner rings of simplicial complexes (resp. $F$-pure rings satisfying the condition $(\operatorname{S}_2)$ of Serre) are typical examples of strictly closed (resp. weakly Arf) rings, which we shall show in Section 3 (resp. Section 4).

Unless otherwise specified, throughout this paper let $R$ denote a commutative ring, and $\overline{R}$ the integral closure of $R$ in its total ring $\operatorname{Q}(R)$ of fractions.


\section{Strict closures of rings}
Let $S/R$ be an extension of commutative rings and we set
$$
R^{*}=\left\{ \alpha \in S \mid \alpha \otimes 1 = 1 \otimes \alpha \text{ in } S \otimes_R S\right\},
$$
which forms a subring of $S$ containing $R$. We say that $R$ is {\it strictly closed} in $S$, if $R=R^{*}$. Since $(R^*)^*=R^*$ in $S$, $R^*$ is strictly closed in $S$ (\cite[Section 4, p.672]{L}). We simply say that $R$ is strictly closed, if $R$ is strictly closed in $\overline{R}$. We refer the reader to \cite{L, CCCEGIM} for the properties of strict closures. Our present purpose is to give a practical method of producing strictly closed rings, exploring a few concrete examples.





Our key is  the following.

\begin{thm}\label{2.1}
Let $R$ be a commutative ring and $T$ an $R$-subalgebra of the total ring $\operatorname{Q}(R)$ of fractions of $R$. Let $V$ be a non-empty subset of $T$ such that $T=R[V]$. If $fg \in R$ for all $f,g \in V$, then $R$ is strictly closed in $T$.
\end{thm}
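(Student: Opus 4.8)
The plan is to translate the defining condition $\alpha\otimes 1=1\otimes\alpha$ into a concrete $R$-module computation, using the hypothesis only to pin down the $R$-module structure of $T$.

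First I would record what the hypothesis buys us. Writing $M=\sum_{v\in V}Rv$ for the $R$-submodule of $T$ generated by $V$, every product of $\ge 2$ elements of $V$ reduces, by pairing the factors two at a time, into $R$ (even length) or into $R\cdot v\subseteq M$ (odd length), since each pair $fg$ lies in $R$. Hence $T=R[V]=R+M$, and moreover $M\cdot M\subseteq R$ and $R\cdot M\subseteq M$; in particular every $\alpha\in T$ has the form $\alpha=r+m$ with $r\in R$ and $m\in M$. Next I would reduce to $M$: since $R\subseteq R^{*}$ always and $R^{*}$ is an additive subgroup, $\alpha=r+m\in R^{*}$ forces $m=\alpha-r\in R^{*}$, so it suffices to prove $R^{*}\cap M\subseteq R$. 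Thus I fix $\alpha\in M$ with $\alpha\otimes 1=1\otimes\alpha$ in $T\otimes_R T$ and aim to show $\alpha\in R$.

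The detection step is to apply the projection $T\otimes_R T\to T\otimes_R\overline T$, $\overline T:=T/R$, that kills the $R$ in the right-hand factor. It sends $\alpha\otimes 1\mapsto 0$ and $1\otimes\alpha\mapsto 1\otimes\overline\alpha$, so the hypothesis yields $1\otimes\overline\alpha=0$ in $T\otimes_R\overline T$; equivalently $\overline\alpha$ lies in the kernel of $\iota\otimes\mathrm{id}_{\overline T}\colon \overline T=R\otimes_R\overline T\to T\otimes_R\overline T$, where $\iota\colon R\hookrightarrow T$. In the \emph{favorable} case where the sum $T=R+M$ is direct, $\overline T\cong M$ is an $R$-module direct summand of $T$, the map $\iota\otimes\mathrm{id}_{\overline T}$ is a split injection, and $\overline\alpha=0$, i.e. $\alpha\in R$; concretely, in $T\otimes_R T=(R\oplus M)^{\otimes 2}$ the terms $\alpha\otimes 1$ and $1\otimes\alpha$ sit in the disjoint summands $M\otimes_R R$ and $R\otimes_R M$, so their difference vanishes only when $\alpha=0$ in $M\otimes_R R\cong M$.

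The real content, and where I expect the main obstacle to lie, is removing the directness assumption, i.e. proving $\iota\otimes\mathrm{id}_{\overline T}$ injective in general. By the long exact $\Tor$ sequence attached to $0\to R\to T\to\overline T\to 0$, its kernel is the image of the connecting map $\Tor_1^R(\overline T,\overline T)\to R\otimes_R\overline T=\overline T$, so the crux is to annihilate this $\Tor_1$ contribution when $R\cap M\neq 0$. This is exactly the point at which I would have to use the multiplicative hypothesis $M\cdot M\subseteq R$ essentially, beyond its role in producing the decomposition: multiplication by an element of $M$ carries $R$ into $M$ and $M$ into $R$, and I would look to convert this interchange into an honest $R$-linear retraction $T\to R$ (equivalently, into the vanishing of the connecting map). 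Turning the interchange into such a splitting in full generality is the delicate step I anticipate; once it is available, the computation of the previous paragraph closes the argument.
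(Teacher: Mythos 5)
Your preliminary reductions are all sound: $T=R+M$ with $M\cdot M\subseteq R$, the reduction to showing $R^{*}\cap M\subseteq R$, and the observation that $\alpha\otimes 1=1\otimes\alpha$ forces $1\otimes\overline{\alpha}=0$ in $T\otimes_R\overline{T}$ are correct. But the argument stops exactly where the theorem begins. The case you actually complete is the one in which $R$ is an $R$-module direct summand of $T$, and in that case strict closedness holds for trivial reasons with no use of the hypothesis $fg\in R$: for $\alpha=r+n$ with $n$ in a complement $N$, the two terms of $\alpha\otimes 1-1\otimes\alpha$ lie in the disjoint summands $N\otimes_R R$ and $R\otimes_R N$ of $T\otimes_RT$. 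The general case, which you defer as ``the delicate step,'' is the entire content of the theorem, and the mechanism you propose for it --- an $R$-linear retraction $T\to R$, equivalently a splitting of $R\hookrightarrow T$ --- does not exist in general. Already for $R=k[t^2,t^3]\subseteq T=k[t]$ with $V=\{t\}$ (where the theorem applies, since $t\cdot t\in R$), an $R$-linear $\pi:T\to R$ with $\pi|_R=\mathrm{id}$ would satisfy $t^2\pi(t)=\pi(t^3)=t^3$, forcing $\pi(t)=t\notin R$. So no splitting is available, and the injectivity of $\overline{T}\to T\otimes_R\overline{T}$ cannot be obtained this way; the gap is genuine.

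For comparison, the paper closes this gap without any splitting or $\Tor$ vanishing, and it is instructive to see where the hypothesis actually enters. After reducing to a finite set $V=\{f_1,\dots,f_n\}$, one presents $T$ by $\varepsilon:R^{\oplus(n+1)}\to T$, $(a_0,\dots,a_n)\mapsto a_0+\sum_i a_if_i$, and notes (this is the only use of $f_if_j\in R$) that every relation $(a_0,\dots,a_n)\in\Ker\varepsilon$ has its zeroth coordinate $a_0$ in the conductor $R:T$. Tensoring a free presentation with $T$ and using right-exactness, the identity $\sum_i a_i[(-f_i)\otimes 1+1\otimes f_i]=0$ shows that $\sum_i a_i(-f_i)$ lies in the ideal of $T$ generated by the zeroth coordinates of the relations, hence in $R:T\subseteq R$, which gives $\sum_i a_if_i\in R$ directly. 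In your language: one does not prove that the obstruction class vanishes, but that it is realized by an element of the conductor, which is already in $R$. Any repair of your argument would have to aim at the conductor rather than at a splitting.
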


We divide the proof of Theorem \ref{2.1} into a few steps. Firstly, let $\alpha \in T$ such that $\alpha \otimes 1 = 1 \otimes \alpha$ in $T \otimes_RT$. Then, there exists a finitely generated $R$-submodule $L$ of $T$ which contains both the elements $\alpha$ and $1$ such that $\alpha \otimes 1 = 1 \otimes \alpha$ in $L\otimes_RL$. Since $L$ is finitely generated, we get a finite subset $W$ of $V$, so that $L \subseteq R[W]$. Because
$$L \subseteq R[W] \subseteq R[V] =T \subseteq \operatorname{Q}(R),$$
we then have $\alpha \otimes 1 = 1 \otimes \alpha$ in $R[W]$. Therefore, passing to $W$, without loss of generality we may assume that $V$ is a finite set.

Let $V = \{f_1, f_2, \ldots, f_n\}$. Then $T=R+\sum_{i=1}^nRf_i$. We consider the $R$-linear map
$$
\varepsilon : R^{\oplus (n+1)} \to T
$$
defined by $$\varepsilon \left(\begin{smallmatrix}
a_0\\
a_1\\
\vdots\\
a_n
\end{smallmatrix}\right)=a_0 + a_1f_1+\ldots +a_nf_n$$ for all $\left(\begin{smallmatrix}
a_0\\
a_1\\
\vdots\\
a_n
\end{smallmatrix}\right)\in R^{\oplus (n+1)}.$
Consequently, if $\left(\begin{smallmatrix}
a_0\\
a_1\\
\vdots\\
a_n\\
\end{smallmatrix}\right)\in \Ker \varepsilon$, then $a_0 = -\sum_{i=1}^na_if_i,$
so that $$a_0f_j=- \sum_{i=1}^n(a_if_i)f_j=- \sum_{i=1}^na_i(f_if_j) \in R$$ for all $1 \le j \le n$. Hence, $a_0T \subseteq R$. Therefore, denoting by $R:T$ be the conductor ideal, we get the following.

\begin{lem}\label{2.2}
We have $a_0 \in R:T$ for every $\left(\begin{smallmatrix}
a_0\\
a_1\\
\vdots\\
a_n
\end{smallmatrix}\right)\in \Ker \varepsilon$.
\end{lem}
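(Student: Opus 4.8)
The plan is to unwind the definition of the conductor and reduce the claim to a single product computation. Starting from an element of $\Ker\varepsilon$, the defining relation $a_0 + \sum_{i=1}^n a_if_i = 0$ rearranges to $a_0 = -\sum_{i=1}^n a_if_i$. Since the conductor $R:T$ consists of those $r \in R$ with $rT \subseteq R$, to prove $a_0 \in R:T$ it suffices to verify the inclusion $a_0 T \subseteq R$.

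First I would reduce $a_0 T \subseteq R$ to finitely many membership checks. Because $V = \{f_1,\ldots,f_n\}$ and, crucially, the hypothesis of Theorem \ref{2.1} forces $f_if_j \in R$ for all $i,j$, every product of generators collapses back into $R$; hence $T = R[V]$ is already generated \emph{as an $R$-module} by $1, f_1,\ldots,f_n$, that is $T = R + \sum_{i=1}^n Rf_i$ (as recorded just above). Consequently $a_0T \subseteq R$ is equivalent to the two statements $a_0 \in R$, which is immediate, and $a_0 f_j \in R$ for each $1 \le j \le n$.

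The remaining step is the product computation, which is where the assumption $fg \in R$ for $f,g \in V$ does all the work. Substituting the relation for $a_0$, I would compute
$$
a_0 f_j = -\sum_{i=1}^n a_i f_i f_j = -\sum_{i=1}^n a_i(f_if_j),
$$
and since each $f_if_j \in R$ and each $a_i \in R$, every summand lies in $R$, so $a_0 f_j \in R$. This yields $a_0 T \subseteq R$ and hence $a_0 \in R:T$. There is no genuine obstacle here: the only thing to notice is that the multiplicative hypothesis plays two roles at once, namely it makes $\{1,f_1,\ldots,f_n\}$ an $R$-module generating set for $T$ (justifying the reduction) and it places each $a_0 f_j$ in $R$. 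The argument is thus a short direct calculation once these two uses of the hypothesis are identified.
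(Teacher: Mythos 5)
Your argument is correct and is essentially identical to the paper's: both rearrange the kernel relation to $a_0 = -\sum_{i=1}^n a_if_i$, compute $a_0f_j = -\sum_{i=1}^n a_i(f_if_j) \in R$ using the hypothesis $f_if_j \in R$, and conclude $a_0T \subseteq R$ via the module decomposition $T = R + \sum_{i=1}^n Rf_i$. Your explicit remark that the multiplicative hypothesis is also what makes $\{1, f_1, \ldots, f_n\}$ an $R$-module generating set is a point the paper leaves implicit, but it is the same proof.
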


\begin{proof}[Proof of Theorem $\ref{2.1}$]
Let us choose a non-zero free $R$-module $G$ and a homomorphism $\varphi : G \to R^{\oplus (n+1)}$ so that $\Im \varphi = \Ker \varepsilon$. We consider the exact sequence 
$$
T \otimes_RG \overset{T \otimes_R\varphi}{\to} T\otimes_RR^{\oplus (n+1)} \overset{T\otimes_R\varepsilon}{\to} T\otimes_RT \to 0
$$
of $T$-modules induced from the exact sequence
$$
G \overset{\varphi}{\to} R^{\oplus (n+1)} \overset{\varepsilon}{\to} T \to 0.
$$
Let $\{g_\lambda\}_{\lambda \in \Lambda}$ be a free basis of $G$ and we naturally  identify
$$
T\otimes_RG = T^{\oplus \Lambda}\ \ \text{and}\ \ T \otimes_RR^{\oplus (n+1)} =T^{\oplus (n+1)}.
$$

Now let $\alpha \in R^*$, where $R^*$ denotes the strict closure of $R$ in $T$, and write $$\alpha = a_0+ a_1f_1+ \ldots + a_nf_n$$ with $a_i \in R$. We set $\beta = a_1f_1 + a_2f_2 + \ldots+a_nf_n$. Then, since $\beta \in R^*$, we have $$(-\beta)\otimes 1 +  1\otimes \beta=\sum_{i=1}^na_i\left[(-f_i)\otimes 1 + 1 \otimes f_i\right]=0$$ in $T\otimes_RT =T^{\oplus (n+1)}$. Because 
$$(-f_i)\otimes 1 + 1 \otimes f_i = \varepsilon \left(\begin{smallmatrix}
-f_i\\
0\\
\vdots\\
0\\
1\\
0\\
\vdots\\
0
\end{smallmatrix}\right) 
$$
for each $1 \le i \le n$
(here in the vector of the right hand side the identity $1$ appears at the $i$-th entry), we have 
$$
\sum_{i=1}^na_i\left(\begin{smallmatrix}
-f_i\\
0\\
\vdots\\
0\\
1\\
0\\
\vdots\\
0
\end{smallmatrix}\right) \in \Ker~ (T\otimes_R\varepsilon) = \Im ~(T \otimes_R\varphi). 
$$
Therefore, the sum $\sum_{i=1}^na_i(-f_i)$ belongs to the ideal $I$ of $T$ generated by the entries of the first row of the matrix $\varphi =[g_\lambda]$. Consequently, Lemma \ref{2.2} shows that $$
\sum_{i=1}^na_i(-f_i) \in R:T \subseteq R.
$$
Hence, $\beta \in R$, so that $\alpha = a_0 + \beta  \in R$. Thus, $R$ is strictly closed in $T$.
\end{proof}

\begin{corollary}\label{2.2a}
Let $R$ be a commutative ring and assume that $\overline{R}=R[f]$ for some $f \in \operatorname{Q}(R)$. If $f^2 \in R$, then $R$ is strictly closed, so that $R$ is a weakly Arf ring.
\end{corollary}

\begin{cor}
Let $R$ be a commutative ring and $J =(a_1, a_2, \ldots, a_{n})~(n \ge 3)$ an ideal of $R$ such that $a_1^2 = a_2a_3$. We set $I = (a_2, a_3, \ldots, a_{n})$ and consider the Rees algebras 
$$
\calR = \calR(I) = R[It] \subseteq \calT  = \calR(J) = R[Jt] \subseteq R[t]
$$
of $I$ and $J$, respectively, where $t$ denotes an indeterminate over $R$. Then $\calR$ is strictly closed in $\calT$, provided $I$ contains a non-zerodivisor of $R$.
\end{cor}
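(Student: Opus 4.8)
The plan is to deduce the statement directly from Theorem~\ref{2.1}, applied to the ring extension $\calT/\calR$. The first step is to record the structural observation that, since $J=(a_1)+I$, the Rees algebra $\calT=\calR(J)=R[a_1t,a_2t,\ldots,a_nt]$ is obtained from $\calR=\calR(I)=R[a_2t,\ldots,a_nt]$ by adjoining the single element $a_1t$; hence $\calT=\calR[V]$ with $V=\{a_1t\}$. With this choice of $V$, the multiplicative hypothesis of Theorem~\ref{2.1} (read with base ring $\calR$) reduces to checking the single product $(a_1t)(a_1t)=a_1^2t^2=a_2a_3t^2=(a_2t)(a_3t)$, which lies in $\calR$ because $a_2t,a_3t\in\calR$. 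This is exactly where the assumption $a_1^2=a_2a_3$ is consumed, and it is a one-line verification.

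The remaining, and genuinely substantive, hypothesis of Theorem~\ref{2.1} is that $\calT$ be an $\calR$-subalgebra of the total ring of fractions $\operatorname{Q}(\calR)$, and this is precisely the role of the assumption that $I$ contains a non-zerodivisor $b$ of $R$. I would argue as follows. Since $\calR$ is a subring of $R[t]$ and $b$ is a non-zerodivisor of $R$, it remains a non-zerodivisor of $R[t]$ and therefore of $\calR$. On the other hand $b\in I$ gives $bt\in\calR$, so inside $\operatorname{Q}(\calR)$ one may form the quotient $t=(bt)/b$. Hence $t\in\operatorname{Q}(\calR)$, which forces $R[t]\subseteq\operatorname{Q}(\calR)$ and in particular $\calT\subseteq R[t]\subseteq\operatorname{Q}(\calR)$, as needed. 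Once both hypotheses are in place, Theorem~\ref{2.1} applies verbatim and yields that $\calR$ is strictly closed in $\calT$, completing the argument.

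I expect the only real obstacle to be the verification that $\calT\subseteq\operatorname{Q}(\calR)$. The naive route of trying to exhibit $a_1t$ itself as a fraction $p/q$ with $p,q\in\calR$ is awkward: passing to homogeneous components reduces one to finding $c\in I^d$ with $a_1c\in I^{d+1}$, and since $a_1\notin I$ in general there is no evident such $c$. The clean way around this is the observation above that the whole indeterminate $t$ already becomes a fraction in $\operatorname{Q}(\calR)$ as soon as $I$ meets the non-zerodivisors of $R$; after that the desired containment is automatic, and everything else is formal.
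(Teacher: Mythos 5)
Your proposal is correct and follows essentially the same route as the paper: the paper's proof likewise applies Theorem~\ref{2.1} with $\calT=\calR[a_1t]$ and $(a_1t)^2=(a_2t)(a_3t)\in\calR$, and simply asserts $\operatorname{Q}(\calR)=\operatorname{Q}(\calT)=\operatorname{Q}(R[t])$ from the non-zerodivisor hypothesis. Your explicit verification that $t=(bt)/b\in\operatorname{Q}(\calR)$ is just the standard justification of that asserted equality, spelled out.
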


\begin{proof}
We have $\operatorname{Q}(\calR)=\operatorname{Q}(\calT)= \operatorname{Q}(R[t])$, because $I$ contains a non-zerodivisor of $R$. Therefore, the assertion follows from Theorem \ref{2.1}, since $\calT = \calR[a_1t]$ and $(a_1t)^2 \in \calR$.
\end{proof}

\begin{corollary}\label{2.3b}
Let $T$ be an integrally closed integral domain and $A$ a subring of $T$. Choose a non-empty subset $V$ of $T$ so that $T=A[V]$ and $0 \not\in V$. We set $$R= A\left[\{fg \mid f,g \in V\}, \{f^3 \mid f \in V\}\right].$$ Then, $T= \overline{R}$, and $R$ is strictly closed, whence $R$ is a weakly Arf ring.
\end{corollary}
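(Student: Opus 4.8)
The plan is to deduce this from Theorem \ref{2.1}, with the given set $V$ serving as the generating set; the only real work is to verify the hypotheses of that theorem and to identify $T$ with $\overline{R}$. Since $T$ is an integral domain and $A \subseteq R \subseteq T$, all three rings are domains. First I would record the membership facts that come straight from the definition of $R$: for every $f \in V$ one has $f^2 = f\cdot f \in R$ and $f^3 \in R$, and moreover $fg \in R$ for all $f,g \in V$.

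Next I would locate $T$ inside $\operatorname{Q}(R)$, which is the step where the cubes $f^3$ are genuinely used. Because $0 \notin V$ and $T$ is a domain, each $f \in V$ satisfies $f \neq 0$, hence $f^2 \neq 0$, and so $f = f^3/f^2$ lies in the fraction field $\operatorname{Q}(R)$ of $R$. Since $T = A[V]$ and $A \subseteq R$, this gives $T = R[V] \subseteq \operatorname{Q}(R)$, and combining with $R \subseteq T$ yields the equality $\operatorname{Q}(R) = \operatorname{Q}(T)$ of fraction fields. Thus $T$ is an $R$-subalgebra of $\operatorname{Q}(R)$ generated by $V$, and the product condition $fg \in R$ is exactly the fact noted above.

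To see $T = \overline{R}$ I would argue by double inclusion inside $\operatorname{Q}(R) = \operatorname{Q}(T)$. Each $f \in V$ is a root of $x^2 - f^2 \in R[x]$, hence integral over $R$; as the elements integral over $R$ form a ring containing $A$, we get $T = A[V] \subseteq \overline{R}$. Conversely, any element of $\operatorname{Q}(R)$ integral over $R$ is integral over $T$, and $T$ is integrally closed in $\operatorname{Q}(T) = \operatorname{Q}(R)$, so it lies in $T$; hence $\overline{R} \subseteq T$. Together these give $\overline{R} = T$.

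Finally I would invoke Theorem \ref{2.1} with this $T = R[V]$ and the subset $V$: since $fg \in R$ for all $f,g \in V$, the theorem shows that $R$ is strictly closed in $T = \overline{R}$, that is, $R$ is strictly closed. The weakly Arf conclusion is then immediate, since every strictly closed ring is weakly Arf, as recalled in the introduction. The main obstacle is the second paragraph: one must ensure that adjoining $V$ does not enlarge the fraction field, and this is precisely guaranteed by the presence of the cubes $f^3$ alongside $f^2$, so that $f = f^3/f^2 \in \operatorname{Q}(R)$. Without the cubes, $\operatorname{Q}(R)$ could be strictly smaller than $\operatorname{Q}(T)$ and the identification $T = \overline{R}$ would break down.
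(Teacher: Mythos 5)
Your argument is correct and follows the same route as the paper: use $f = f^3/f^2$ to place $V$ inside $\operatorname{Q}(R)$, identify $T$ with $\overline{R}$ via integrality of each $f$ over $R$ and the integral closedness of $T$, and then apply Theorem \ref{2.1} with the product condition $fg \in R$. The paper's proof is just a terser version of exactly this; your write-up fills in the details it leaves implicit.
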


\begin{proof}
Since $f^2, f^3 \in R$ for all $f \in V$, $V \subseteq \operatorname{Q}(R)$, whence  $\operatorname{Q}(R)=\operatorname{Q}(T)$, and $T = \overline{R}$. By Theorem \ref{2.1} $R$ is strictly closed in $T$, because $fg \in R$ for all $f,g \in V$.
\end{proof}

We explore more concrete examples. Let us begin with the following.

\begin{Example}
Let $S=k[X,Y]$ be the polynomial ring over a field $k$ and set $$R=k[X^5, XY^4, Y^5]$$
in $S$. Then $$R^*=k[X^5, X^9Y^6, X^{8}Y^7, X^4Y^{11}, XY^4,Y^5].$$
\end{Example}

\begin{proof} Notice that $$\overline{R}=k[X^{5},X^{4}Y ,X^{3}Y^2 ,X^{2}Y^3 ,XY^4, Y^5].$$ We set$$T=k[X^5, X^{13}Y^7, X^9Y^6,  X^4Y^{11}, XY^4,Y^5].$$ Then, because $R^*$ is strictly closed and all the elements $$X^9Y^6= \frac{X^5Y^5{\cdot}X^5Y^5}{XY^4}, \ \ X^{13}Y^7= \frac{X^9Y^6{\cdot}X^9Y^6}{X^5Y^5}, \ \ X^4Y^{11}=\frac{X^2Y^8{\cdot}X^2Y^8}{Y^5}$$ belong to $[R^*]^*$, we get $T \subseteq R^*$, whence $T^*=R^*$ (\cite[Section 4, p.672]{L}).

We now notice that $\overline{R}= T+T{\cdot}X^4Y+T{\cdot}X^3Y^2+T{\cdot}X^2Y^3$. Let $\varepsilon : T^{\oplus 4} \to \overline{R}$ be the $T$-linear map defined by $$\varepsilon \left(\begin{smallmatrix}
a_0\\
a_1\\
a_2\\
a_3
\end{smallmatrix}\right)
=a_0+a_1X^4Y+a_2X^3Y^2+a_3X^2Y^3.$$
Then, it is just a routine work to check that $\Ker \varepsilon $ is generated by the following vectors:

\bigskip
\noindent
{\footnotesize 
$
\left(\begin{smallmatrix}
X^5Y^5\\
-XY^4\\
0\\
0
\end{smallmatrix}\right), 
\left(\begin{smallmatrix}
X^9Y^6\\
-X^5Y^5\\
0\\
0
\end{smallmatrix}\right),
\left(\begin{smallmatrix}
X^{13}Y^7\\
-X^9Y^6\\
0\\
0
\end{smallmatrix}\right),
\left(\begin{smallmatrix}
X^{17}Y^8\\
-X^{13}Y^7\\
0\\
0
\end{smallmatrix}\right),
$\\
$
\left(\begin{smallmatrix}
X^9Y^6\\
0\\
-X^6Y^4\\
0\\
\end{smallmatrix}\right),
\left(\begin{smallmatrix}
X^{13}Y^7\\
0\\
-X^{10}Y^5\\
0
\end{smallmatrix}\right),
\left(\begin{smallmatrix}
X^{12}Y^8\\
0\\
-X^9Y^6\\
0
\end{smallmatrix}\right),
\left(\begin{smallmatrix}
X^{16}Y^9\\
0\\
-X^{13}Y^7\\
0
\end{smallmatrix}\right),
\left(\begin{smallmatrix}
X^4Y^{11}\\
0\\
-XY^9\\
0
\end{smallmatrix}\right),
\left(\begin{smallmatrix}
X^3Y^{12}\\
0\\
-Y^{10}\\
0
\end{smallmatrix}\right),
\left(\begin{smallmatrix}
X^7Y^{13}\\
0\\
-X^4Y^{11}\\
0
\end{smallmatrix}\right),
\left(\begin{smallmatrix}
X^5Y^{10}\\
0\\
-X^2Y^{8}\\
0
\end{smallmatrix}\right),
$\\
$
\left(\begin{smallmatrix}
X^{13}Y^{7}\\
0\\
0\\
-X^{11}Y^4\\
\end{smallmatrix}\right),
\left(\begin{smallmatrix}
X^2Y^{8}\\
0\\
0\\
-Y^5\\
\end{smallmatrix}\right),
\left(\begin{smallmatrix}
X^{11}Y^{9}\\
0\\
0\\
-X^9Y^6\\
\end{smallmatrix}\right),
\left(\begin{smallmatrix}
X^{15}Y^{10}\\
0\\
0\\
-X^{13}Y^7\\
\end{smallmatrix}\right),
\left(\begin{smallmatrix}
X^3Y^{12}\\
0\\
0\\
-XY^9\\
\end{smallmatrix}\right),
\left(\begin{smallmatrix}
X^6Y^{14}\\
0\\
0\\
-X^4Y^{11}\\
\end{smallmatrix}\right),
\left(\begin{smallmatrix}
X^4Y^{11}\\
0\\
0\\
-X^2Y^8\\
\end{smallmatrix}\right),
$\\
$
\left(\begin{smallmatrix}
0\\
Y^{5}\\
-XY^4\\
0
\end{smallmatrix}\right),
\left(\begin{smallmatrix}
0\\
X^9Y^{6}\\
-X^{10}Y^5\\
0
\end{smallmatrix}\right),
\left(\begin{smallmatrix}
0\\
X^{13}Y^{7}\\
-X^{14}Y^6\\
0
\end{smallmatrix}\right),
\left(\begin{smallmatrix}
0\\
X^{12}Y^{8}\\
-X^{13}Y^7\\
0
\end{smallmatrix}\right),
\left(\begin{smallmatrix}
0\\
X^4Y^{11}\\
-X^5Y^{10}\\
0
\end{smallmatrix}\right),
\left(\begin{smallmatrix}
0\\
X^3Y^{12}\\
-X^4Y^{11}\\
0
\end{smallmatrix}\right),
$\\
$
\left(\begin{smallmatrix}
0\\
X^9Y^{6}\\
0\\
-X^{11}Y^{4}\\
\end{smallmatrix}\right),
\left(\begin{smallmatrix}
0\\
X^{13}Y^{7}\\
0\\
-X^{5}Y^{5}\\
\end{smallmatrix}\right),
\left(\begin{smallmatrix}
0\\
X^7Y^{8}\\
0\\
-X^{9}Y^{6}\\
\end{smallmatrix}\right),
\left(\begin{smallmatrix}
0\\
Y^{10}\\
0\\
-X^{2}Y^{8}\\
\end{smallmatrix}\right),
\left(\begin{smallmatrix}
0\\
X^4Y^{11}\\
0\\
-X^{6}Y^{9}\\
\end{smallmatrix}\right),
\left(\begin{smallmatrix}
0\\
X^3Y^{12}\\
0\\
-X^{5}Y^{10}\\
\end{smallmatrix}\right),
\left(\begin{smallmatrix}
0\\
X^{11}Y^{9}\\
0\\
-X^{13}Y^{7}\\
\end{smallmatrix}\right),
\left(\begin{smallmatrix}
0\\
X^{2}Y^{13}\\
0\\
-X^{4}Y^{11}\\
\end{smallmatrix}\right),
$\\
$
\left(\begin{smallmatrix}
0\\
0\\
Y^{5}\\
-XY^{4}\\
\end{smallmatrix}\right),
\left(\begin{smallmatrix}
0\\
0\\
X^{9}Y^{6}\\
-X^{10}Y^{5}\\
\end{smallmatrix}\right),
\left(\begin{smallmatrix}
0\\
0\\
X^{13}Y^{7}\\
-X^{14}Y^{6}\\
\end{smallmatrix}\right),
\left(\begin{smallmatrix}
0\\
0\\
X^{12}Y^{8}\\
-X^{13}Y^{7}\\
\end{smallmatrix}\right),
\left(\begin{smallmatrix}
0\\
0\\
X^{4}Y^{11}\\
-X^{5}Y^{10}\\
\end{smallmatrix}\right),
\left(\begin{smallmatrix}
0\\
0\\
X^{3}Y^{12}\\
-X^{4}Y^{11}\\
\end{smallmatrix}\right).
$}

\bigskip

\noindent
Therefore, Proof of Theorem \ref{2.1} guarantees that $T^* \subseteq T + J$, where $J$ denotes the ideal of $\overline{R}$ generated by the entries of the first rows in the vectors above, so that we have $$T^* \subseteq T+J=T + kX^8Y^7$$ since $$T+J = T + X^5Y^5{\cdot}\overline{R}.$$ On the other hand,
because
{\footnotesize
$$
\begin{pmatrix}
X^8Y^7\\
0\\
0\\
-X^6Y^4
\end{pmatrix}
= X^3Y^2
\begin{pmatrix}
X^5Y^5\\
- XY^4\\
0\\
0
\end{pmatrix}
+ X^4Y
\begin{pmatrix}
0\\
Y^5\\
- XY^4\\
0
\end{pmatrix}+ X^5
\begin{pmatrix}
0\\
0\\
Y^5\\
-XY^4
\end{pmatrix},
$$}

\noindent
we see $X^8Y^7 \otimes 1 = 1 \otimes X^8Y^7$ in $\overline{R} \otimes_T\overline{R}$, that is $X^8Y^7 \in T^*$, which guarantees $T^*=T+kX^8Y^7$. Consequently $$R^*= k[X^5,X^9Y^6,X^8Y^7,X^4Y^{11},XY^{4},Y^5], $$
since $T^* = R^*$.
\end{proof}

\begin{ex}
Let $(R, \m)$ be a two-dimensional regular local ring with $\m =(x,y)$. Let $I = (x^3, xy^4, y^5)$ and $\calR= R[It]$, where $t$ denotes an indeterminate. Then $\calR$ is a strictly closed ring with $\overline{\calR}=R[Jt]$, where $J = (x^3, x^2y^2, xy^4, y^5)$. Therefore $\overline{\calR}\ne \calR$. 
\end{ex}

\begin{proof}
Let $J = (x^3, x^2y^2, xy^4, y^5)$ and  set $T = R[Jt]$. Then, since $J$ is an integrally closed ideal of $R$, we get $T = \overline{\calR}$, while $T = \calR[x^2y^2t]$ and $(x^2y^2t)^2 \in \calR$. Hence, by Corollary \ref{2.2a} $\calR$ is strictly closed.
\end{proof}

\begin{Example}
Let $S =k[X,Y]$ be the polynomial ring over a field $k$. Let $n \ge 3$ be an integer and set $$R=k[X^{n-i}Y^i \mid 0 \le i \le n, \ i\ne 1]$$ in $S$. Then $R$ is a strictly closed Cohen-Macaulay ring with $\dim R=2$.
\end{Example}

\begin{proof}Let $T= k[X^{n-i}Y^i \mid 0 \le i \le n]$. Then, $T = \overline{R}$, and $T=R[X^{n-1}Y]$, so that $R$ is strictly closed by Corollary \ref{2.2a}, because $(X^{n-1}Y)^2 \in R$. Since $X^n,Y^n$ forms a regular sequence in $R$, $R$ is a Cohen-Macaulay ring of dimension $2$. 
\end{proof}




\begin{Example}\label{2.7b}
Let $S=k[X_1, X_2, \ldots, X_n]$  $(n \ge 1)$  be the polynomial ring over a field $k$ and set $$R = k[\{X_iX_j \mid 1 \le i \le j \le n\}, \{X_i^3 \mid 1 \le i \le n\}]$$ in $S$. Then $R$ is a strictly closed ring such that $R \ne \overline{R}$.
\end{Example}

The condition that $R= A\left[\{fg \mid f,g \in V\}, \{f^3 \mid f \in V\}\right]$ in Corollary \ref{2.3b} is in some sense the best possible as the following example shows. Notice that $R_2$ is not a weakly Arf ring, because $\frac{X^3}{X^2}, \frac{X^2Y}{X^2} \in S$ but $\frac{X^3{\cdot}X^2Y}{X^2}=X^3Y \not\in R_2$.

\begin{remark}
Let $S = k[X,Y]$ be the polynomial ring over a field $k$. We set $$R_1= k[X^2, XY, Y^2, X^3,Y^3]\ \ \  \text{and}\ \ \ R_2 = k[X^2, X^2Y, Y^2, X^3, Y^3]$$
in $S$. Then $S = \overline{R_1} = \overline{R_2}$. The ring $R_i$ is strictly closed, while $R_2$ is not weakly Arf, so that  $R_2$ is not strictly closed.
\end{remark}




\section{Stanley-Reisner rings}
The purpose of this section is to show that Stanley-Reisner algebras are strictly closed rings. Let $R$ be a Noetherian reduced ring. We write $\Min R=\{\p_1, \p_2, \ldots, \p_{\ell}\}$ where $\ell = \sharp \Min R \ge 2$. We assume that the following specific condition is satisfied:
\begin{center}
$(*)$ $R/{\p_i}$ is integrally closed for every $1 \le i \le \ell$. 
\end{center}
We then have the embedding
$$
0 \to R \overset{\varphi}{\to} R/{\p_1} \oplus R/{\p_2} \oplus \cdots \oplus R/{\p_{\ell}} = \overline{R},
$$
where $\varphi(\alpha) = (\overline{\alpha}, \overline{\alpha}, \ldots, \overline{\alpha})$ for each $\alpha \in R$ (here we denote by $\overline{\alpha}$ the image of $\alpha$ in each $R/\p_i$).  Let $e_i = (0, \ldots, 0, \overset{\overset{i}{\vee}}{1}, 0, \ldots, 0) \in \overline{R}$ for each $1 \le i \le \ell$. Hence $$\overline{R} = \sum_{i=1}^{\ell} Re_i = \bigoplus_{i=1}^{\ell} Re_i$$
and 
$$
\overline{R}\otimes_R \overline{R} = \sum_{1 \le i, j \le \ell} R(e_i \otimes e_j) = \bigoplus_{1 \le i, j \le \ell}  Re_i \otimes_R Re_j. 
$$ 
Let $\alpha \in \overline{R}$ and write $\alpha = (\overline{\alpha_1}, \overline{\alpha_2}, \ldots, \overline{\alpha_{\ell}})$ with $\alpha_i \in R$. We then have 
$$
\alpha \otimes 1 = \sum_{1 \le i, j \le \ell}\alpha_i(e_i\otimes e_j) \ \ \text{and} \ \   1 \otimes \alpha = \sum_{1 \le i, j \le \ell}\alpha_j(e_i\otimes e_j)
$$
in  $\overline{R}\otimes_R \overline{R}$. Therefore, 
$\alpha \otimes 1 = 1 \otimes \alpha$ if and only if $$\alpha_i(e_i\otimes e_j) = \alpha_j(e_i\otimes e_j)$$ for all $1 \le i, j \le \ell$. Since $$R(e_i \otimes e_j)=Re_i \otimes_R Re_j \cong R/{\p_i} \otimes_R R/{\p_j} \cong R/{[\p_i + \p_{j}]},$$ the latter condition is equivalent to saying that $$\alpha_i - \alpha_j \in \p_i + \p_j$$ for all $1 \le i, j \le \ell$.

With this notation we have the following. 

\begin{prop}\label{2.3}
Suppose that $\ell = 2$. Then $R$ is strictly closed, so that it is a weakly Arf ring. 
\end{prop}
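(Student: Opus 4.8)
The plan is to prove the nontrivial inclusion $R^* \subseteq R$ (the reverse inclusion $R \subseteq R^*$ being automatic), using the characterization of $R^*$ established in the discussion immediately preceding the proposition together with the Chinese Remainder Theorem. Specializing that discussion to $\ell = 2$, an element $\alpha = (\overline{\alpha_1}, \overline{\alpha_2}) \in \overline{R}$ (with $\alpha_1, \alpha_2 \in R$) lies in $R^*$ if and only if $\alpha_1 - \alpha_2 \in \p_1 + \p_2$. Thus the entire problem reduces to showing that every such pair $(\overline{\alpha_1}, \overline{\alpha_2})$ is of the form $(\overline{\beta}, \overline{\beta})$ for a single $\beta \in R$, i.e.\ lies in the image of $\varphi$.

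First I would record the one ring-theoretic input: since $R$ is reduced with $\Min R = \{\p_1, \p_2\}$, the nilradical $\sqrt{(0)} = \p_1 \cap \p_2$ vanishes, so $\p_1 \cap \p_2 = (0)$. This is exactly what identifies $\varphi$ with the diagonal embedding $R = R/(\p_1 \cap \p_2) \hookrightarrow R/\p_1 \oplus R/\p_2$.

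Next I would invoke the standard short exact sequence
$$0 \to R/(\p_1 \cap \p_2) \overset{\varphi}{\to} R/\p_1 \oplus R/\p_2 \overset{\delta}{\to} R/(\p_1 + \p_2) \to 0,$$
where $\delta(\overline{\alpha_1}, \overline{\alpha_2}) = \overline{\alpha_1 - \alpha_2}$. Its exactness at the middle term is elementary: if $\alpha_1 - \alpha_2 = s + t$ with $s \in \p_1$ and $t \in \p_2$, then $\beta := \alpha_1 - s = \alpha_2 + t$ satisfies $\overline{\beta} = \overline{\alpha_1}$ in $R/\p_1$ and $\overline{\beta} = \overline{\alpha_2}$ in $R/\p_2$, so $(\overline{\alpha_1}, \overline{\alpha_2}) = \varphi(\beta)$. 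Combining this with the characterization gives $\Ker \delta = R^*$, while exactness gives $\Ker \delta = \Im \varphi = R$; hence $R^* = R$ and $R$ is strictly closed. The final assertion that $R$ is then weakly Arf follows from the implication ``strictly closed $\Rightarrow$ weakly Arf'' recalled in the introduction.

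There is essentially no hard step here: once one accepts the equivalence $\alpha \in R^* \Leftrightarrow \alpha_1 - \alpha_2 \in \p_1 + \p_2$ from the preceding paragraph, the proposition is just exactness of the Chinese Remainder sequence, with reducedness supplying $\p_1 \cap \p_2 = (0)$ so that the left-hand term is $R$ itself. The only point deserving care is matching the second map $\delta$ of the CRT sequence with the difference appearing in the characterization of $R^*$; I do not anticipate a genuine obstacle. It is worth noting that this argument does not extend verbatim to $\ell \geq 3$, where the pairwise conditions $\alpha_i - \alpha_j \in \p_i + \p_j$ no longer force membership in the diagonal, which is presumably why the statement is confined to $\ell = 2$.
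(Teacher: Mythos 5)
Your proposal is correct and follows essentially the same route as the paper: both reduce to the characterization $\alpha \in R^* \Leftrightarrow \alpha_1 - \alpha_2 \in \p_1 + \p_2$ and then exhibit the common lift $\beta = \alpha_2 + \beta_2$ after decomposing $\alpha_1 - \alpha_2 = \beta_1 + \beta_2$ with $\beta_i \in \p_i$. Your packaging of this step as exactness of the Chinese Remainder sequence (with reducedness giving $\p_1 \cap \p_2 = (0)$) is only a cosmetic reorganization of the same computation.
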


\begin{proof}
Let $\alpha \in \overline{R}$ and assume that $\alpha \otimes 1 = 1 \otimes \alpha$ in $\overline{R}\otimes_R \overline{R}$. We write $\alpha = (\overline{\alpha_1}, \overline{\alpha_2})$ with $\alpha_i \in R$. Then, since $\alpha_1 - \alpha_2 \in \p_1 + \p_2$, we get $\alpha_1 - \alpha_2 = \beta_1 + \beta_2$ for some $\beta_1 \in \p_1$ and $\beta_2 \in \p_2$. Because
\begin{eqnarray*}
\alpha = (\overline{\alpha_1}, \overline{\alpha_2}) = (\overline{\alpha_2 + \beta_1 + \beta_2}, \overline{\alpha_2}) =  (\overline{\alpha_2 + \beta_2}, \overline{\alpha_2}) = (\overline{\alpha_2 + \beta_2}, \overline{\alpha_2 + \beta_2}),
\end{eqnarray*}
we have $\alpha \in R$, whence $R = R^*$ in $\overline{R}$.
\end{proof}

\begin{corollary}
Let $S$ be a regular local ring and let $a_1, a_2, \ldots, a_m, b_1, b_2, \ldots, b_n$ be a regular system of parameters of $S$ $($hence $m + n = \dim S$$)$. Then $$R = S/[(a_1, a_2, \ldots, a_m)\cap(b_1, b_2, \ldots, b_n)]$$ is a strictly closed ring, so that it is weakly Arf.
\end{corollary}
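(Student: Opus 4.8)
The plan is to exhibit $R$ as a ring satisfying the running hypotheses of this section with precisely two minimal primes, and then to invoke Proposition \ref{2.3}. Write $P = (a_1, a_2, \ldots, a_m)$ and $Q = (b_1, b_2, \ldots, b_n)$ for the corresponding ideals of $S$, so that $R = S/(P \cap Q)$. I may assume $m, n \ge 1$, since if one of the two blocks is empty the relevant ideal is $(0)$, whence $R = S$ is a regular local ring and the assertion is immediate.

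First I would check that $P$ and $Q$ are prime ideals of $S$. As $a_1, \ldots, a_m$ (resp. $b_1, \ldots, b_n$) is part of a regular system of parameters, the quotient $S/P$ (resp. $S/Q$) is again a regular local ring, hence an integrally closed domain; in particular $P$ and $Q$ are prime. Next I would verify that $P$ and $Q$ are incomparable, and this is the one place where the hypothesis that the \emph{entire} list $a_1, \ldots, a_m, b_1, \ldots, b_n$ forms a regular system of parameters is essential, not merely that each block does. Indeed, the images of all these elements in $\fkm/\fkm^2$ are linearly independent over the residue field, so an inclusion such as $P \subseteq Q$ would force some $a_i$ to be a linear combination of the $b_j$ modulo $\fkm^2$, contradicting that independence.

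Granting incomparability, $P \cap Q$ is radical, being an intersection of primes, and a prime ideal of $S$ contains $P \cap Q$ exactly when it contains $P$ or $Q$ (since $PQ \subseteq P \cap Q$). Hence the minimal primes of $S$ over $P \cap Q$ are precisely $P$ and $Q$. Consequently $R = S/(P \cap Q)$ is a reduced Noetherian ring with $\Min R = \{\,P/(P\cap Q),\ Q/(P \cap Q)\,\}$, so $\ell = 2$; moreover the two residue rings are $R/(P/(P\cap Q)) \cong S/P$ and $R/(Q/(P\cap Q)) \cong S/Q$, which are integrally closed by the first step. Thus condition $(*)$ holds, the identification $\overline{R} = S/P \oplus S/Q$ furnished by the preamble is in force, and Proposition \ref{2.3} applies verbatim to conclude that $R$ is strictly closed, and therefore weakly Arf.

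The only genuine obstacle is the incomparability check in the second step; the rest is a direct verification that the standing setup of the section is satisfied, and I anticipate no difficulty there once $P$ and $Q$ are known to be distinct, incomparable primes with regular quotients.
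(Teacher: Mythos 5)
Your proof is correct and follows exactly the route the paper intends: the corollary is stated without proof immediately after Proposition \ref{2.3}, as a direct application of it, and your verification that $P$ and $Q$ are incomparable primes with regular (hence integrally closed) quotients, so that $\ell=2$ and condition $(*)$ holds, supplies precisely the missing details. No gaps.
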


We  now consider the case where $R$ is the Stanley-Reisner ring of a simplicial complex. 
Let $V=\{ 1, 2, \ldots, n\}$ be a vertex set and $\Delta$ a simplicial complex on $V$. We assume $\Delta \neq \emptyset$ and denote by $\calF(\Delta)$ the set of facets of $\Delta$. We write $\calF(\Delta) =\{F_1, F_2, \ldots, F_{\ell}\}$ where  $\ell=\sharp \calF(\Delta)$. Let $S=k[X_1, X_2, \ldots, X_n]$ be the polynomial ring over a field $k$ and set $$R =k[\Delta]= S/{I_{\Delta}},$$where $I_{\Delta}= \bigcap_{i=1}^{\ell} P_i$ and $P_i = (X_{\alpha} \mid \alpha \not\in F_i)$ for each $1 \le i \le \ell$.

We then have the following.

\begin{thm}\label{main2}
The Stanley-Reisner ring $R=k[\Delta]$ of $\Delta$ is strictly closed, whence $R$ is a weakly Arf ring. 
\end{thm}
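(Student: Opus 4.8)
The plan is to reduce everything to the setup developed just before Proposition~\ref{2.3} and then carry out a monomial-by-monomial gluing. First I would dispose of the trivial case: if $\ell = \sharp\calF(\Delta) = 1$, then $R = S/P_1 \cong k[X_\alpha \mid \alpha \in F_1]$ is a polynomial ring, hence integrally closed and therefore strictly closed. So assume $\ell \ge 2$. The ring $R = k[\Delta]$ is a finitely generated reduced $k$-algebra whose minimal primes are exactly the $\p_i = P_i/I_\Delta$, and each quotient $R/\p_i = S/P_i \cong k[X_\alpha \mid \alpha \in F_i]$ is a polynomial ring, hence integrally closed. Thus condition $(*)$ holds and the discussion preceding Proposition~\ref{2.3} applies: writing $\alpha = (\overline{\alpha_1}, \ldots, \overline{\alpha_\ell}) \in \overline{R}$ with $\alpha_i \in R$, we have $\alpha \otimes 1 = 1 \otimes \alpha$ in $\overline{R} \otimes_R \overline{R}$ if and only if $\alpha_i - \alpha_j \in \p_i + \p_j$ for all $1 \le i, j \le \ell$.

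Next I would translate this condition combinatorially using the monomial basis. Since the faces of $\Delta$ are closed under passing to subsets, $F_i \cap F_j$ is again a face, and $P_i + P_j = (X_\alpha \mid \alpha \notin F_i \cap F_j)$, so that $R/(\p_i + \p_j) \cong k[X_\alpha \mid \alpha \in F_i \cap F_j]$. Regarding each canonical representative $\alpha_i$ as a polynomial in $k[X_\alpha \mid \alpha \in F_i]$ and expanding it in monomials, the membership $\alpha_i - \alpha_j \in \p_i + \p_j$ says precisely that $\alpha_i$ and $\alpha_j$ carry the same coefficient on every monomial $m$ whose support $\operatorname{supp}(m)$ is contained in $F_i \cap F_j$ (the passage to $R/(\p_i+\p_j)$ kills exactly those monomials whose support meets $F_i \setminus F_j$ or $F_j \setminus F_i$).

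Then I would glue. For a monomial $m$ whose support is a face of $\Delta$, choose any facet $F_i$ with $\operatorname{supp}(m) \subseteq F_i$ and declare the coefficient $c_m$ of $m$ in the sought element $\beta$ to be the coefficient of $m$ in $\alpha_i$. If $F_j$ is another such facet, then $\operatorname{supp}(m) \subseteq F_i \cap F_j$, so the compatibility just established makes this choice independent of the chosen facet; hence $\beta := \sum_m c_m m$ is a well-defined element of $R$, the sum being finite since each $\alpha_i$ has finite support and $\ell < \infty$. By construction the image of $\beta$ in each $R/\p_i$ equals $\overline{\alpha_i}$, so $\varphi(\beta) = \alpha$ and therefore $\alpha \in R$. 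This shows $R = R^*$ in $\overline{R}$, i.e.\ $R$ is strictly closed, and hence weakly Arf.

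I anticipate that the main point requiring care is the well-definedness of the gluing, which hinges on the observation that a monomial's support lying in two facets forces it to lie in their intersection; this is exactly what lets the pairwise conditions $\alpha_i - \alpha_j \in \p_i + \p_j$ suffice, with no higher-order compatibility among three or more facets needed. The identifications $R/\p_i \cong k[X_\alpha \mid \alpha \in F_i]$ and $P_i + P_j = (X_\alpha \mid \alpha \notin F_i \cap F_j)$ are standard facts about Stanley–Reisner rings, which I would state but not belabor.
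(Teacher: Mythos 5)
Your proof is correct, and it rests on the same two pillars as the paper's: the pairwise criterion $\alpha_i-\alpha_j\in\p_i+\p_j$ from the discussion preceding Proposition~\ref{2.3}, and the fact that $P_i+P_j=(X_\alpha\mid \alpha\notin F_i\cap F_j)$ is a monomial prime, so that the criterion can be checked coefficient by coefficient on monomials. The organization differs, though. The paper first invokes the $\Bbb Z^n$-grading to show $R^*$ is a graded subring of $\overline{R}$, reduces to a single homogeneous element $\alpha$ of degree $h$ (so each component is a scalar multiple $c_iX^h$), and then argues by contradiction with the index set $\Gamma=\{i\mid f_i\notin P_i\}$ to show the scalars agree and glue to $cX^h\in R$. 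You skip the graded reduction entirely and glue all monomial coefficients at once, constructing the preimage $\beta=\sum_m c_m m$ directly; the well-definedness hinges on exactly the observation you flag, namely that $\operatorname{supp}(m)\subseteq F_i$ and $\operatorname{supp}(m)\subseteq F_j$ force $\operatorname{supp}(m)\subseteq F_i\cap F_j$, so the pairwise conditions suffice. Your version is a direct construction rather than a proof by contradiction, and it avoids having to justify that the strict closure is graded; the paper's version isolates the degree-$h$ piece and makes the scalar bookkeeping completely explicit. Both are sound; yours is arguably the more transparent write-up, and your explicit treatment of the degenerate case $\ell=1$ (which the paper's setup tacitly excludes) is a small but genuine point in its favor.
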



\begin{proof}
We consider $S$ to be naturally a $\Bbb Z^n$-graded ring. Then, since both of the rings $R$ and $\overline{R}$ are $\Bbb Z^n$-graded, we get the commutative diagram
$$
\xymatrix{
S \ar[r]\ar[d] & S/{P_1} \oplus S/{P_2} \oplus \cdots \oplus S/{P_{\ell}} \ar[d]^{\cong}  \\
R=S/I_{\Delta} \ar[r] & R/{\p_1} \oplus R/{\p_2} \oplus \cdots \oplus  R/{\p_{\ell}} = \overline{R}
}
$$
of $\Bbb Z^n$-graded rings, where $\p_i = P_i/I_{\Delta}$ for each $1 \le i \le \ell$ and the vertical maps are canonical. Notice that $R^* = \Ker \varphi$ is a $\Bbb Z^n$-graded subring of $\overline{R}$, since the $R$-linear map
$$
\varphi : \overline{R} \to \overline{R} \otimes_R \overline{R}, \ \alpha \mapsto \alpha \otimes 1 - 1 \otimes \alpha
$$
is a homomorphism of graded $R$-modules.

We now assume that $R^* \not\subseteq R$ and choose a homogeneous element $\alpha \in R^* \setminus R$. We set $h = \deg \alpha$ and write $h = (a_1, a_2, \ldots, a_n)$ where $a_i \ge 0$ for each $1 \le i \le n$. Then, since $\overline{R} = R/{\p_1} \oplus R/{\p_2} \oplus \cdots \oplus  R/{\p_{\ell}}$, we get  
$$
\alpha = (\overline{\alpha_1}, \overline{\alpha_2}, \ldots, \overline{\alpha_{\ell}})
$$
with $\alpha_i \in R_{h}$. Let us choose elements $f_i \in S_h$ so  that $\alpha_i = \overline{f_i}$ in $R$. Then, since $\alpha \in R^*$, we get $\alpha_i - \alpha_j \in \p_i + \p_j$ for all $1 \le i, j \le \ell$, as we have shown above. Therefore
\begin{center}
$f_i - f_j \in P_i + P_j$
\end{center}
 for all $1 \le i, j \le \ell$. Let us write $f_i = c_i X^h$ with $c_i \in k$, where $X^h = X_1^{a_1} X_2^{a_2} \cdots X_n^{a_n}$. 
We now set 
$$
\Gamma = \{ \ i \mid 1 \le i \le \ell ~\text{and}~f_i \not\in P_i\}.
$$ 
Then $\Gamma \ne \emptyset$, and for all $i, j \in \Gamma$  we have
$$
(c_i - c_j) X^h \in P_i + P_j. 
$$
Therefore, if $c_i \ne c_j$, then $X^h \in P_i + P_j$, so that either $X^h \in P_i$ or $X^h\in P_j$ (because each $P_i$ is a graded ideal of $S$ generated by monomials in $X_i$'s), which implies that either $f_i \in P_i$ or $f_j \in P_j$. This is absurd, whence $c_i = c_j$ for all $i, j \in \Gamma$. We set $c = c_i$ ($i \in \Gamma$); hence $c \ne 0$.

If $\sharp \Gamma = \ell$, we then have  $f_1 = f_2 = \cdots = f_{\ell}$, so that $\alpha \in R$. Therefore, $\sharp\Gamma < \ell$, and  we may choose at least one index $1 \le i \le \ell$ so that $i \not\in \Gamma$. For each $j \in \Gamma$, we then have $$(c_i - c) X^h \in P_i + P_j,$$ while $c_i = 0$ or $X^h \in P_i$, since  $c_i X^h \in P_i$. If $X^h \not\in P_i$, then $c_i = 0$, so that $$cX^h \in P_i + P_j,$$ which shows $X^h \in P_j$. This is impossible, as $j \in \Gamma$. Therefore, $X^h \in P_i$ for each $i \not\in \Gamma$, and so, setting $f = c X^h$ and $\beta = \overline{f}$  in $R$, we get 
\begin{center}
$\overline{\beta} = \overline{\alpha_i}$ \ in $R/\p_i$
\end{center}
for every $1 \le i \le \ell$. Hence, $\alpha \in R$, which gives the final contradiction. Therefore, $R=k[\Delta]$ is strictly closed in $\overline{R}$. 
\end{proof}

One of the simplest example of Stanley-Reisner rings is the following. When the characteristic of the field $k$ is positive, this ring is so called $F$-pure, and the weak Arf property of $R$ can be deduced in a different way, which we shall discuss in the next section.

\begin{Example}
Let $S = k[X_1, X_2, \ldots, X_n]$ $(n \ge 3)$ be the polynomial ring over a field $k$ and set $$R= S/(X_1 \cdots \overset{\vee}{X_i}\cdots X_n \mid 1 \le i \le n).$$ Then, $R$ is a strictly closed Cohen-Macaulay ring of dimension one, so that it is weakly Arf.
\end{Example}


\section{$F$-pure rings}
The purpose of this section is to explore certain $F$-pure rings are always weakly Arf.  Before going ahead, we need some preliminaries.

\begin{prop}[{cf. \cite[Corollary 3.14]{GMP}}]\label{prop2}
Let $(R, \m)$ be an arbitrary local ring $($not necessarily Noetherian, and not necessarily of positive characteristic$)$. If $\m \overline{R} \subseteq R$, then $R$ is a weakly Arf ring. 
\end{prop}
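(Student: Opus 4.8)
The plan is to verify Condition $(2)$ in the definition of a weakly Arf ring directly, using the hypothesis $\m\overline{R}\subseteq R$. Recall that $R$ is weakly Arf precisely when, for any $x,y,z\in R$ with $x$ a non-zerodivisor and $\frac{y}{x},\frac{z}{x}\in\overline{R}$, one has $\frac{yz}{x}\in R$. So I would begin by fixing such $x,y,z$ and setting $u=\frac{y}{x}$ and $v=\frac{z}{x}$, elements of $\overline{R}$. The goal is then to show $\frac{yz}{x}=xuv=\frac{yz}{x}\in R$, i.e.\ that $uvx\in R$ where $u,v\in\overline{R}$ and $x\in R$.

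The key observation is the role of $\m$. Since $(R,\m)$ is local, the non-zerodivisor $x$ either is a unit or lies in $\m$. If $x$ is a unit of $R$, then $\frac{yz}{x}=yz\cdot x^{-1}\in R$ trivially, so that case is immediate. Hence I would reduce to the case $x\in\m$. Now the hypothesis $\m\overline{R}\subseteq R$ enters: since $u=\frac{y}{x}\in\overline{R}$ and $x\in\m$, the product $xu=y\in R$ is automatic, but more usefully, I want to exploit that multiplying \emph{any} element of $\overline{R}$ by an element of $\m$ lands in $R$. The element I must control is $\frac{yz}{x}=x\cdot u\cdot v$. Writing it as $(xu)\cdot v=y\cdot v=y\cdot\frac{z}{x}$, I see $\frac{yz}{x}=y\cdot\frac{z}{x}$ with $y\in R$ and $\frac{z}{x}\in\overline{R}$. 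If $y\in\m$ this is in $\m\overline{R}\subseteq R$ and we are done; the remaining subcase is $y\notin\m$, i.e.\ $y$ a unit, whence $\frac{z}{x}=y^{-1}\cdot\frac{yz}{x}$, and symmetrically I would push the argument onto $z$ and $v=\frac{z}{x}$.

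More cleanly, I expect the right packaging is: since $x\in\m$, for the element $v=\frac{z}{x}\in\overline{R}$ we get $xv=z\in R$, and then $\frac{yz}{x}=\frac{y}{x}\cdot z=u\cdot z$ with $u\in\overline{R}$ and $z\in R$. So $\frac{yz}{x}$ simultaneously equals $u\cdot z$ and $y\cdot v$, with $u,v\in\overline{R}$ and $y,z\in R$. If either $y$ or $z$ lies in $\m$, then $\frac{yz}{x}\in\m\overline{R}\subseteq R$. The genuinely delicate subcase is when $x\in\m$ but both $y$ and $z$ are units of $R$. Then $u=\frac{y}{x}$ and $v=\frac{z}{x}$ are elements of $\overline{R}$ whose products with the unit $y$ (resp.\ $z$) are again units, forcing $u,v$ themselves to be units of $\overline{R}$; but $u=y/x$ with $x\in\m$ means $u$ is a non-unit-scaled-fraction, and I would derive a contradiction (or show this subcase cannot occur) from $x=y/u$, $x=z/v\in\m$, examining whether $x\overline{R}$ meets the unit group.

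The main obstacle I anticipate is exactly this last subcase, handling $x\in\m$ with $y,z$ both units. The resolution should again come from $\m\overline{R}\subseteq R$: from $\frac{y}{x}\in\overline{R}$ and $x\in\m$ we obtain $y=x\cdot\frac{y}{x}\in\m\overline{R}\subseteq R$, which is consistent, but the cleaner route is to observe $\frac{yz}{x^2}=\frac{y}{x}\cdot\frac{z}{x}\in\overline{R}$, so $\frac{yz}{x}=x\cdot\frac{yz}{x^2}\in\m\overline{R}\subseteq R$ directly, since $x\in\m$ and $\frac{yz}{x^2}\in\overline{R}$. This identity $\frac{yz}{x}=x\cdot\left(\frac{y}{x}\cdot\frac{z}{x}\right)$ with $x\in\m$ and $\frac{y}{x}\cdot\frac{z}{x}\in\overline{R}$ is likely the crux of the whole argument and bypasses the case analysis entirely; I would lead with it and keep the unit case for $x$ as the only separate consideration.
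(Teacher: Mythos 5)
Your final paragraph is exactly the paper's proof: reduce to $x\in\m$ (the unit case being trivial), then observe $\frac{yz}{x}=x\cdot\frac{y}{x}\cdot\frac{z}{x}\in\m\overline{R}\subseteq R$ since $\frac{y}{x}\cdot\frac{z}{x}\in\overline{R}$. The preceding case analysis is unnecessary detour, but the argument you settle on is correct and identical to the paper's.
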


\begin{proof}
Let $x, y, z \in R$. Assume that $x$ is a non-zerodivisor of $R$ and that $\frac{y}{x}, \frac{z}{x} \in \overline{R}$. We have to show $\frac{yz}{x} \in R$. To do this, we may assume $x \in \m$. We then have
$$
\frac{yz}{x} = x \cdot \frac{y}{x} \cdot\frac{z}{x} \in \m \overline{R} \subseteq R.
$$
Hence, $R$ is a weakly Arf ring.
\end{proof}

\begin{remark}
Let $R$ be a Cohen-Macaulay local ring with $\dim R=1$. If $\m \overline{R} \subseteq R$, then $R$ is an analytically unramified local ring and $R$ is an almost Gorenstein ring (\cite[Corollary 3.12]{GMP}).
\end{remark}

\begin{Example}
Let $S = k[[X_1, X_2, \ldots, X_d]]$ $(d > 0)$ be the formal power series ring over a field $k$ and let $\n =(X_1, X_2, \ldots, X_d)$, the maximal ideal of $S$. Let $J$ be an arbitrary $\n$-primary ideal of $S$ and set $R = k + J.$
Then, $R$ is a Noetherian local ring with $\m =J$ the maximal ideal, so that $R$ is a weakly Arf ring, because $S = \overline{R}$ and $\m S =J \subseteq R$.
\end{Example}

We are now back to the main topic of this section. Let $R$ be a commutative ring, containing a field of positive characteristic $p >0$. We denote $R$ by $S$ when we regard $R$ as an $R$-algebra via the Frobenius map $$F : R \to R,\  a \mapsto a^p.$$ Notice that $R$ is a reduced ring if and only if $F : R \to S$ is injective. We say that the ring $R$ is {\it $F$-pure}, if for each $R$-module $M$ the homomorphism $$M \to S \otimes_R M, \  m \mapsto 1 \otimes m$$ is injective. The Frobenius map naturally induces the homomorphism $$f :\operatorname{Q}(R)/R \to \operatorname{Q}(R)/R, \ \overline{a} \mapsto \overline{a^p}$$ of $R$-modules, where $\overline{a}$ (resp. $\overline{a^p}$) denotes, for each $a \in \operatorname{Q}(R)$, the image of $a$ in $\operatorname{Q}(R)/R$ (resp. the image of $a^p$ in $\operatorname{Q}(R)/R$). Notice that the homomorphism $f :\operatorname{Q}(R)/R \to \operatorname{Q}(R)/R$ is injective, once $R$ is an $F$-pure ring.

Remember that a Noetherian ring $R$ is said to satisfy the condition $({\rm S}_2)$ of Serre, if $$\depth R_\fkp \ge \inf \{2, \dim R_\fkp \}$$ for every $\fkp \in \Spec R$. We now come to the main result of this section.

\begin{thm}\label{main}
Every Noetherian F-pure ring $R$ satisfying $({\rm S}_2)$ is a weakly Arf ring.
\end{thm}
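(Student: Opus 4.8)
The plan is to reduce the verification of the weak Arf condition to the case of a one-dimensional local ring by exploiting $(\mathrm S_2)$, and then to settle that case by combining the finiteness of a carefully chosen integral extension with the injectivity of the Frobenius action $f$ on $\operatorname{Q}(R)/R$. Throughout I would use that an $F$-pure ring is reduced (since $F\colon R\to S$ is injective), so that $\overline R$ behaves well and $\operatorname{Q}(R)$ is the product of the residue fields at the minimal primes.

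First I would record the reduction. Fix $x,y,z\in R$ with $x$ a non-zerodivisor and $\tfrac yx,\tfrac zx\in\overline R$, and set $w=\tfrac{yz}{x}=x\cdot\tfrac yx\cdot\tfrac zx\in\overline R$; the goal is $w\in R$. Consider the finite $R$-module $M=R+Rw\subseteq\overline R$ and the cyclic quotient $C=M/R\cong R/(R:_Rw)$. For $\p\in\Ass_R C$ one has $\depth C_\p=0$; localizing the exact sequence $0\to R\to M\to C\to 0$ at $\p$ and using that $M_\p\subseteq\overline{R_\p}$ is torsion-free (so $\depth M_\p\ge1$) together with $(\mathrm S_2)$ (so $\depth R_\p\ge\min\{2,\height\p\}$), the long exact sequence in local cohomology forces $\height\p\le1$. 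Hence $w\in R$ as soon as $w\in R_\p$ for every prime $\p$ with $\height\p\le1$; at a minimal prime $R_\p$ is a field and the containment is automatic, so only the height-one primes remain. Since localization preserves $F$-purity (purity of the Frobenius is stable under the flat base change $R\to R_\p$) and integral closure commutes with localization, I am reduced to proving that a one-dimensional reduced local $F$-pure ring is weakly Arf.

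So let $(A,\m)$ be one-dimensional, reduced, local and $F$-pure, and take $x,y,z\in A$ as above with $u=\tfrac yx,\,v=\tfrac zx\in\overline A$; I may assume $x\in\m$. The key point is that I do not need $\overline A$ to be module-finite: the subring $A'=A[u,v]\subseteq\overline A$ is module-finite over $A$ because $u,v$ are integral, and $w=xuv\in A'$. Clearing a common denominator shows the conductor $\c'=(A:_{A'}A')$ contains a non-zerodivisor of $A$; as $A$ is reduced and one-dimensional, any ideal containing a non-zerodivisor is $\m$-primary, so $\m^{M}\subseteq\c'$ for some $M$. Choosing $e$ with $p^{e}\ge M$ gives $x^{p^{e}}\in\m^{p^{e}}\subseteq\c'$, whence
$$
w^{p^{e}}=x^{p^{e}}(uv)^{p^{e}}\in x^{p^{e}}A'\subseteq A .
$$
Finally I invoke $F$-purity: the induced map $f\colon\operatorname{Q}(A)/A\to\operatorname{Q}(A)/A$, $\overline a\mapsto\overline{a^{p}}$, is injective, hence so is $f^{e}$; since $f^{e}(\overline w)=\overline{w^{p^{e}}}=0$, we conclude $\overline w=0$, i.e. $w\in A$. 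This gives the weak Arf property for $A$, and the reduction then yields the theorem.

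The step I expect to be most delicate is the passage to codimension one: one must ensure that the obstruction $C=M/R$ to $w\in R$ has only height-$\le 1$ associated primes, which is exactly where $(\mathrm S_2)$ enters and where the local-cohomology bookkeeping (and the compatibilities $\overline{R_\p}=(\overline R)_\p$ and $\operatorname{Q}(R_\p)$ versus $\operatorname{Q}(R)_\p$) must be handled carefully. By contrast, the one-dimensional core is robust: the sole role of $F$-purity there is to convert the $p^{e}$-th power membership $w^{p^{e}}\in A$ back into $w\in A$, which is precisely the injectivity of $f$ and which sidesteps the circularity one would hit by trying to use that injectivity alone at the top.
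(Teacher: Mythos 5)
Your proof is correct, and while the overall architecture (reduce via $(\mathrm S_2)$ to the one-dimensional local case, then exploit the injectivity of $f\colon\operatorname{Q}(R)/R\to\operatorname{Q}(R)/R$ coming from $F$-purity) matches the paper, the two halves are each executed differently. For the reduction, the paper simply cites \cite[Theorem 2.6]{CCCEGIM}, whereas you reprove the needed special case by hand through the associated primes of $C=(R+Rw)/R$ and the depth lemma; this is a standard $(\mathrm S_2)$ argument and makes the step self-contained. The more substantive divergence is in the one-dimensional core. The paper establishes the stronger conclusion $\m\overline R\subseteq R$ and then invokes Proposition \ref{prop2}; to do so it must first know that $\overline R$ is a finite $R$-module, which it obtains by passing to the completion and using that $\widehat R$ remains $F$-pure (hence reduced), citing \cite{Hashimoto}. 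You avoid the integral closure entirely by working with the module-finite subring $A[u,v]$ generated by the two given fractions: its conductor contains the non-zerodivisor $x^N$, hence is $\m$-primary, which yields $w^{p^e}\in A$ for $p^e\gg 0$, and injectivity of $f^e$ then pulls this back to $w\in A$. This buys you a proof that sidesteps analytic unramifiedness and the completion argument altogether (and needs no appeal to Proposition \ref{prop2}), at the modest cost of proving only the weak Arf property rather than the stronger containment $\m\overline A\subseteq A$ that the paper's route delivers along the way. Both arguments rest on the same key consequence of $F$-purity recorded before the theorem, namely the injectivity of $\overline a\mapsto\overline{a^p}$ on $\operatorname{Q}(R)/R$, so nothing circular or unsupported enters your version.
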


\begin{proof}
Let $\fkp \in \Spec R$ with  $\depth R_\fkp \le 1$. By \cite[Theorem 2.6]{CCCEGIM} it is enough to show that $R_\fkp$ is a weakly Arf ring. To do this, we may assume $\dim R_\fkp =1$, since $R$ satisfies $({\rm S}_2)$. Hence, $R_\fkp$ is a one-dimensional Cohen-Macaulay local ring. Because $F$-purity is preserved under localization and $\operatorname{Q}(R_\fkp) = [\operatorname{Q}(R)]_\fkp$, without loss of generality we may also assume that $(R, \m)$ is a Cohen-Macaulay local ring with $\dim R=1$. 
Therefore, since the $\m$-adic completion $\widehat{R}$ of $R$ remains $F$-pure (\cite[Lemma 3.26]{Hashimoto}), $\widehat{R}$ is a reduced ring, so that the normalization $\overline{R}$ of $R$ is a module-finite extension of $R$, whence $\ell_R(\overline{R}/R) < \infty$, that is $\m^{\ell} {\cdot}(\overline{R}/R) =(0)$ for some $\ell \gg 0$. Hence, $f^{\ell}(\m {\cdot}(\overline{R}/R)) =(0)$ for all $\ell \gg 0$. Therefore, the injectivity of the homomorphism $f :\operatorname{Q}(R)/R \to \operatorname{Q}(R)/R$ guarantees that $\m {\cdot}(\overline{R}/R)=(0)$. Thus, $R$ is a weakly Arf ring by Proposition \ref{prop2}.  
\end{proof}






Closing this paper, let us  note one consequence, which follows from Theorem \ref{main} and \cite[Corollary 4.6]{CCCEGIM}.


\begin{cor}
Let $R$ be a Noetherian ring of characteristic $p>0$ such that $R$ is $F$-pure, satisfying $({\rm S}_2)$. If one of the following conditions
\begin{enumerate}[$(1)$]
\item $R$ contains an infinite field.
\item $\height_R M \ge 2$ for every $M \in \Max R$.
\end{enumerate}
is satisfied, then $R$ is strictly closed and $R_\fkp$ is an Arf ring for every $\fkp \in \Spec R$ with $\height_R \fkp =1$. 
\end{cor}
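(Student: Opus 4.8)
The plan is to combine Theorem \ref{main} with the equivalence \cite[Corollary 4.6]{CCCEGIM}, which says that a Noetherian ring satisfying $(\mathrm{S}_2)$ is strictly closed if and only if it is weakly Arf and $R_\fkp$ is an Arf ring for every $\fkp \in \Spec R$ with $\height_R \fkp = 1$. Theorem \ref{main} already supplies the weak Arf property of $R$, so the entire problem collapses to showing that $R_\fkp$ is Arf at each height-one prime $\fkp$. Granting this, the converse half of \cite[Corollary 4.6]{CCCEGIM} returns the strict closedness, while the Arf assertion is exactly what was verified en route, so both conclusions are obtained simultaneously.

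First I would fix $\fkp \in \Spec R$ with $\height_R \fkp = 1$ and record the structure of $R_\fkp$. Since Noetherianity, $F$-purity, and the condition $(\mathrm{S}_2)$ all pass to localizations, Theorem \ref{main} applies verbatim to $R_\fkp$ and shows it to be weakly Arf; moreover $(\mathrm{S}_2)$ forces $\depth R_\fkp \ge 1 = \dim R_\fkp$, so that $R_\fkp$ is a one-dimensional Cohen-Macaulay local ring. Thus $R_\fkp$ already satisfies condition $(2)$ in the definition of an Arf ring, and it remains only to verify condition $(1)$, that every integrally closed ideal of $R_\fkp$ containing a non-zerodivisor admits a principal reduction.

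The crux is that the residue field $\kappa(\fkp)$ of $R_\fkp$ is infinite under either hypothesis. If $(1)$ holds, $\kappa(\fkp)$ contains the prescribed infinite field. If $(2)$ holds, then $\fkp$ cannot be maximal---every maximal ideal has height at least $2$---so $R/\fkp$ is a Noetherian domain of positive dimension, hence an infinite set, and therefore $\kappa(\fkp) = \operatorname{Q}(R/\fkp)$ is infinite. In a one-dimensional Cohen-Macaulay local ring any ideal containing a non-zerodivisor is $\m_\fkp$-primary, and over an infinite residue field the Northcott--Rees theory of reductions furnishes a principal minimal reduction of such an ideal, in particular of the integrally closed ones; this is exactly condition $(1)$. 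Hence $R_\fkp$ is an Arf ring.

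Feeding the weak Arf property of $R$ (Theorem \ref{main}) together with the Arf property of every $R_\fkp$ into \cite[Corollary 4.6]{CCCEGIM} then yields that $R$ is strictly closed, which completes the argument; in case $(2)$ one may alternatively invoke the special form of \cite[Corollary 4.6]{CCCEGIM} equating strict closedness with weak Arfness when $\height_R M \ge 2$ for all $M \in \Max R$. The only genuinely delicate point is the uniform handling of the two hypotheses through the single mechanism of an infinite residue field: transparent under $(1)$, but under $(2)$ resting on the two small observations that a height-one prime is non-maximal and that a positive-dimensional Noetherian domain is infinite, which together guarantee the principal reductions needed to upgrade weakly Arf to Arf.
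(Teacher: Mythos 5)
Your argument is correct and follows the same route the paper intends: the corollary is stated there as an immediate consequence of Theorem \ref{main} together with \cite[Corollary 4.6]{CCCEGIM}, and you do exactly that, merely unwinding why each of the hypotheses $(1)$ and $(2)$ forces the residue field at a height-one prime to be infinite so that Northcott--Rees supplies the principal reductions that upgrade the weakly Arf one-dimensional Cohen--Macaulay local rings $R_\fkp$ to Arf rings. That verification is sound (under $(2)$ a height-one prime is non-maximal and a positive-dimensional Noetherian domain is infinite), and it is precisely the content packaged in the cited corollary, so the two proofs coincide in substance.
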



\vspace{0.5em}

\begin{ac}
The authors would like to thank Rankeya Datta for valuable comments. 
\end{ac}



\begin{thebibliography}{20}



\bibitem{Arf}
{\sc C. Arf}, Une interpr\'{e}tation alg\'{e}brique de la suite des ordres de multiplicit\'{e} d'une branche alg\'{e}brique, {\em Proc. London Math. Soc.}, Series 2, {\bf 50} (1949), 256--287.

\bibitem{AS}
{\sc F. Arslan and N. Sahin}, A fast algorithm for constructing Arf closure and a conjecture.  {\em J. Algebra} {\bf 417} (2014), 148--160. 

\bibitem{BF}
{\sc V. Barucci and R. Fr\"{o}berg}, One-dimensional almost Gorenstein rings, {\em J. Algebra}, {\bf 188} (1997), no. 2, 418--442.

\bibitem{CCCEGIM}
{\sc E. Celikbas, O. Celikbas, C. Ciuperc\u{a}, N. Endo, S. Goto, R. Isobe, and N. Matsuoka}, Weakly Arf rings, arXiv:2006.01330.



\bibitem{CCGT}
{\sc E. Celikbas, O. Celikbas, S. Goto, and N. Taniguchi}, Generalized Gorenstein Arf rings, {\em Ark. Mat.}, {\bf 57} (2019), 35--53. 



\bibitem{GMP}
{\sc S. Goto, N. Matsuoka, and T. T. Phuong}, Almost Gorenstein rings, {\em J. Algebra}, {\bf 379} (2013), 355--381.



\bibitem{Hashimoto}
{\sc M. Hashimoto}, $F$-pure homomorphisms, strong $F$-regularity, and $F$-injectivity, {\em Comm. Algebra}, {\bf 38} (2010), 4569--4596.

\bibitem{L}
{\sc J. Lipman}, Stable ideals and Arf rings, {\em Amer. J. Math.}, {\bf 93} (1971), 649--685. 

\bibitem{RGGB}
{\sc J. C. Rosales, P. A. Garc\'{i}a-S\'{a}nchez, J. I. Garc\'{i}a-Garc\'{i}a, M. B. Branco},  Arf numerical semigroups. {\em J. Algebra}, {\bf 276} (2004), no. 1, 3--12.

\bibitem{Zariski}
{\sc O. Zariski}, Studies in equisingularity. III. Saturation of local rings and equisingularity,  {\em Amer. J. Math.}, {\bf 90} (1968),  961--1023. 

\end{thebibliography}
\end{document}